\documentclass[12pt]{article}

\setlength{\textwidth}{6.3in}
\setlength{\textheight}{8.7in}
\setlength{\topmargin}{0pt}
\setlength{\headsep}{0pt}
\setlength{\headheight}{0pt}
\setlength{\oddsidemargin}{0pt}
\setlength{\evensidemargin}{0pt}

\usepackage[utf8x]{inputenc}
\usepackage[english]{babel}
\usepackage{amssymb, amsmath, amsthm, verbatim, cite, lscape, longtable, graphicx, wrapfig, tikz, subcaption}
\usepackage[english]{babel}
\usepackage[colorinlistoftodos]{todonotes}
\usepackage[font=small,labelfont=bf]{caption}

\newtheorem{lemma}{Lemma}

\newtheorem{proposition}{Proposition}
\newtheorem{definition}[lemma]{Definition}

\title{Combinatorial properties of continuous graphs:\\ A survey of challenges, solutions and open problems}

\author{
Alexander Grigoriev\thanks{\texttt{a.grigoriev@maastrichtuniversity.nl}, Department of Data Analytics and Digitalisation, Maastricht University, The Netherlands}
\and
Katherine Faulkner
\thanks{\texttt{katherinefaulkner1@outlook.com}, Department of Data Analytics and Digitalisation, Maastricht University, The Netherlands}
}

\date{}

\begin{document}
\maketitle

\begin{abstract}
Inspired by notorious combinatorial optimization problems on graphs, in this paper we consider a series of related problems defined using a metric space and topology determined by a graph. Particularly, we present the Independent Set, Vertex Cover, Chromatic Number and Treewidth problems on, so-called, continuous or metric graphs where every edge is represented by a unit-length continuous interval rather than by a pair of vertices. If any point of any unit-interval edge is considered as a possible member of a hitting set or a cover, the classical combinatorial problems become trickier and many open questions arise. Notably, in many real-life applications, such a continuous view of a graph is more natural than the classic combinatorial definition of a graph. The contribution of this paper is twofold: i) we survey the known results for optimization problems on continuous graphs, and ii) we create a list of open problems related to the continuous graphs.  
\end{abstract}

\section{Introduction}

Motivated by numerous applications in transportation, telecommunication and engineering, this paper suggests an alternative definition of a graph and addresses classic combinatorial optimization problems under this new definition of graphs. Notice that in physical networks the joints and/or splits are not unique points for infrastructure installation, e.g., sensors can be installed at any point of a physical or geographical network. Therefore, restricting the definition of a graph vertex to an element of a finite or a countable ground set of objects does not seem to be entirely adequate. Also, restricting the definition of a graph edge to a pair of vertices in only adjacency relation sounds very limiting. In physical networks, one would rather treat an edge as a continuous interval with endpoints in the joints and/or splits, where every point of the continuous interval is a vertex of the network. Such an intuitive re-definition of a graph immediately calls for re-introducing many classic parameters, properties and measures on graphs, e.g., minimum vertex cover, maximum independent set, maximum clique, minimum dominating set and many others. In this paper we redefine and discuss several classic combinatorial problems formulated for the new continuous graphs. As a side note, all problems we consider in the paper are computationally intractable, i.e., $NP$-complete, in their original graph-theoretic variation.

\section{Preliminaries}

\begin{definition}
Given a finite set $V$ (similar to vertices of a graph in the classic graph theoretic sense), a \emph{continuous edge} $e_{u,v}$ between $u\in V$ and $v\in V$ is a (continuum) set of points homeomorphic to a closed interval with \emph{endpoints} in $u$ and $v$. For presentation purposes of this paper, we assume that all continuous edges are the rectifiable intervals of unit length though further problem descriptions and open questions can be easily generalized to continuous edges of possibly varying lengths. 
\end{definition}

\begin{definition}
Given $u,v\in V$ and a rectification homeomorphism $f:e_{u,v}\rightarrow [0,1]$, the \emph{distance} $d(p,q)$ between two points $p$ and $q$ of a continuous edge $e_{u,v}$ is defined as $d(p,q)=|f(p)-f(q)|$. 
\end{definition}

\begin{definition}
Given a finite set of endpoints $V$ and a set of continuous edges $E$ on $V$, a \emph{continuous graph} $\Gamma=(V,E)$ is a topological space on the point set $E$ with the metric $d(x,y):E\times E\rightarrow \mathrm{R}_+$, where the distance between any two points in $E$ is determined by the shortest path concatenation (summing up the distances) of continuous edges in $E$. Naturally, if a path connecting two points does not exist, the distance between these two points equals infinity. 
\end{definition}

Note that, identical to the above defined continuous graphs, metric graphs have been introduced in the topology context, see Mugnolo~\cite{M2021}, and in the geometry context as geometric realizations of the classic combinatorial graphs, see Bandelt and Chepoi~\cite{BC2008}. The correspondence between continuous/metric graphs and classic combinatorial graphs is presented in the following definition.

\begin{definition}
Given a (classic combinatorial) graph $G=(\tilde{V},\tilde{E})$ with vertex set $\tilde{V}$ and edge set $\tilde{E}$, the \emph{corresponding} continuous graph denoted $\Gamma(G)=(V,E)$ is a continuous graph with $V=\tilde{V}$ and $E=\{e_{u,v}:\ (u,v)\in \tilde{E}\}$. Respectively, given a continuous graph $\Gamma(G)=(V,E)$, the \emph{corresponding} (combinatorial) graph $G(\Gamma)=(\tilde{V},\tilde{E})$ is a graph on the vertex set $\tilde{V}=V$ and edge set $\tilde{E}=\{(u,v):\ e_{u,v}\in E\}$. 
\end{definition}

The following definitions translate some basic notions from classic combinatorial graph theory into helpful and very intuitive notions for continuous graphs. 

\begin{definition}
In a continuous graph $\Gamma=(V,E)$, any two endpoints $u, v\in V$ are referred as \emph{adjacent} if $e_{u,v}\in E$, otherwise they are referred as \emph{non-adjacent}. 
\end{definition}

\begin{definition}
In a continuous graph, an inclusion-maximal set of points on finite distance from each other is called a \emph{connectivity component}. 
\end{definition}

\begin{definition}
Given two points $p$ and $q$ of a continuous graph $\Gamma=(V,E)$ on a finite distance from each other, a $(p,q)$-\emph{path} is an inclusion-minimal concatenation of edges in $E$ starting at $p$ (or $q$) and finishing at $q$ (or $p$). A \emph{subgraph} of a continuous graph $\Gamma$ is a union of points in a finite set of paths in $\Gamma$. A (sub)graph is \emph{connected} if all its points are on finite distance from each other. A \emph{cycle} is a subgraph consisting of two $(p,q)$-paths which are disjoint everywhere but in $p$ and $q$, where $p$ and $q$ are some points in $E$. A cycle-free connected subgraph is called a \emph{subtree}, or a \emph{tree} for simplicity.
\end{definition}

\begin{definition}
For a point $p$ in a continuous graph $\Gamma$ and for a positive real number $r>0$, a \emph{ball} $B(p,r)$ is the inclusion-maximal subgraph of $\Gamma$ such that all its points are at distance at most $r$ from $p$. We refer to $p$ and $r$ as the \emph{center} and the \emph{radius} of ball $B(p,r)$, respectively.
\end{definition}

In a similar way, many other re-definitions of graph-theoretic concepts can be formulated for continuous graphs. For chromatic number see Faulkner~\cite{F2018}, for $p$-center see Megiddo and Tamir~\cite{MT1983}, for more recent studies on independent and dominating sets see Hartmann and Janßen~\cite{HJ2024} and Hartmann and Marx~\cite{HM2025}, for Chinese Postman and Travelling Salaesman Problems see Frei et al~\cite{ISAAC2024}. An interesting, though relatively straightforward, exercise is to prove equivalence of definitions above and combinatorial definitions for corresponding continuous and classic combinatorial graphs. 

\section{Open Problems}

\subsection{Maximum independent set}

In 1972, Karp \cite{K1972} introduced a list of twenty-one $NP$-complete problems, one of which was the problem of finding a \emph{maximum independent set} in a graph: Given a graph, one must find a largest set of vertices such that no two vertices in the set are connected by an edge. Such a set of vertices is called a maximum independent set of the graph and, in general, the problem of finding it is $NP$-hard. The problem remains intractable in many families of dense and sparse graphs. In the literature, for a graph $G$, the size of a maximum independent set is usually referred as $\alpha(G)$.

For a continuous graph $\Gamma=(V,E)$, the corresponding continuous counterpart of the problem can be formulated as follows. 

\begin{definition}
Given a positive real number $r>0$, a set of points $I\subseteq E$ is called an $r$-independent (or $r$-stable) set of $\Gamma$ if for any two points $p, q \in I$, it holds that $d(p,q) \geq 2r$. The $r$-independence number of $\Gamma$, further referred as $\alpha_r(\Gamma)$ or $\alpha_r$ for simplicity, is the cardinality of the maximum $r$-independent set in $\Gamma$.
\end{definition}

For illustration see Figure~\ref{fig:envelop}, where on the left side we present a maximum independent set in the combinatorial ``envelop'' graph while on the right side one can see a $1$-independent set in the corresponding continuous counterpart.

\begin{figure}[h]
\includegraphics[width=0.9\linewidth]{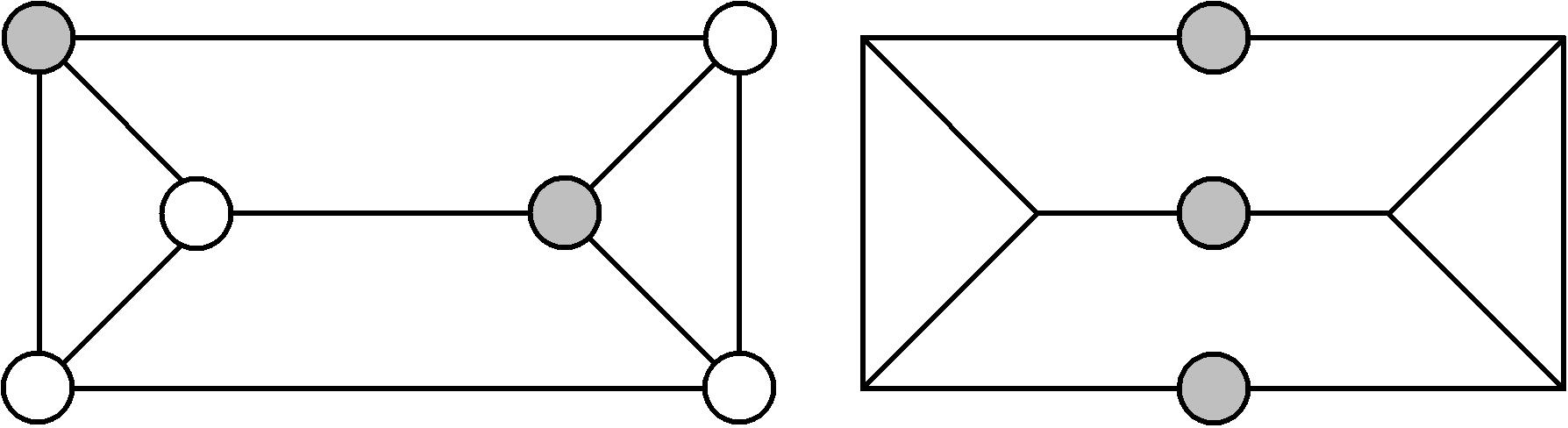} 
\centering
\captionof{figure}{Maximum independent set in combinatorial and in continuous graphs}\label{fig:envelop}
\end{figure}

Unsurprisingly, the maximum $r$-independent set problem on a continuous graph $\Gamma$ can also be seen as a ball packing problem where the task is to find a maximum cardinality set of balls of radii $r$ in $\Gamma$ such that the centers of the balls are mutually distant from each other by at least $2r$ or, in other words, the intersection of any two balls from the set is a finite number of points in $\Gamma$.

For a continuous graph $\Gamma=(V,E)$, a trivial upper bound on $\alpha_r(\Gamma)$ is $n+m/r$, where $n=|V|$ and $m$ is the number of (unit-length) edges in $E$. Here, the term $n$ dominates the number of possibly isolated endpoints in $\Gamma$, while $m/r$ trivially bounds the number of balls in any ball packing of $\Gamma$ with balls of radius $r$.

If $r=1$ and under an additional restriction that $I\subseteq V$, finding a maximum $1$-independent set in $\Gamma$ is equivalent to finding a maximum independent set in $G(\Gamma)$. When the additional restriction $I\subseteq E$ is relaxed, the problem becomes different. 

\begin{proposition}
For any continuous graph $\Gamma=(V, E)$, any $1$-independent set consists of at most $|V|$ points and this bound is tight.
\begin{proof}
Consider any $1$-independent set $S$ of $\Gamma$. Any point of $S$ is at distance at most $1$ from some endpoint in $V$, or, equivalently, every $1$-radius ball of the packing contains at least one endpoint in $V$. Let $S_1=S\setminus V$ and let $S_2=S\cap V$. Every $1$-radius ball in the packing having the center at $p\in S_1$ contains exactly one endpoint from $V$ and this endpoint does not belong to any of the other balls from the packing, neither from $S_1$ nor from $S_2$. Thus, $|S_1|\leq |V|-|S_2|$. The bound is tight, for example, when $\Gamma$ is a set of $n$ isolated (mutually non-adjacent) points. 
\end{proof}
\end{proposition}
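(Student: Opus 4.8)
The plan is to establish the upper bound by building an injective \emph{charging map} $\phi\colon S\to V$ from an arbitrary $1$-independent set $S$ into the endpoint set, and then to settle tightness with a degenerate example. The one idea that makes the argument work cleanly is to charge every chosen point to its \emph{nearest} endpoint, which guarantees a charging distance of at most $\tfrac12$ and hence strict slack in the triangle inequality.

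Concretely, I would define $\phi$ as follows. If $p\in S$ is itself an endpoint, set $\phi(p)=p$; otherwise $p$ is interior to some unit edge $e_{u,v}$ with $f(p)=t\in(0,1)$, and I set $\phi(p)$ to be whichever of $u,v$ is closer to $p$. Because the edge has unit length, $d(p,u)\le t$ and $d(p,v)\le 1-t$, so $d(p,\phi(p))\le\min(t,1-t)\le\tfrac12$; when $p\in V$ this distance is $0$. Thus in every case $d(p,\phi(p))\le\tfrac12$. To see that $\phi$ is injective, suppose distinct $p,q\in S$ satisfy $\phi(p)=\phi(q)=w$. Then $d(p,q)\le d(p,w)+d(w,q)\le\tfrac12+\tfrac12=1<2$, contradicting the defining condition $d(p,q)\ge 2$ of a $1$-independent set. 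Hence $|S|=|\phi(S)|\le|V|$. For tightness I would take $\Gamma$ to be $n$ isolated, pairwise non-adjacent endpoints with $E=\emptyset$: any two of them lie at distance $\infty\ge 2$, so all of $V$ forms a $1$-independent set of size $n=|V|$.

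The step I expect to require the most care --- the main obstacle --- is the injectivity of $\phi$, which is exactly where the naive ball-packing intuition breaks down. It is tempting to argue ``each radius-$1$ ball contains an endpoint, so the number of balls is at most $|V|$,'' but a single radius-$1$ ball around an interior point already contains both endpoints of its edge (and possibly further endpoints along a path), so this crude count is not directly valid. Assigning each center to its \emph{one} nearest endpoint, and using the resulting $\tfrac12+\tfrac12<2$ slack to forbid two centers sharing an endpoint, is what repairs the argument; the remaining computations are routine applications of the triangle inequality.
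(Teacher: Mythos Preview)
Your proof is correct and follows the same high-level strategy as the paper---building an injection from $S$ into $V$---but with a tighter execution. The paper splits $S$ into $S_1=S\setminus V$ and $S_2=S\cap V$ and argues that every radius-$1$ ball centered at a point of $S_1$ contains an endpoint of $V$ that belongs to no other ball of the packing, yielding $|S_1|\le|V|-|S_2|$. Your nearest-endpoint charging achieves the same injection directly, and the bound $d(p,\phi(p))\le\tfrac12$ turns injectivity into a one-line triangle-inequality computation. As you yourself note, a radius-$1$ ball around an interior point of a unit edge actually contains \emph{both} endpoints of that edge, so the paper's phrase ``contains exactly one endpoint'' is literally imprecise (though its exclusivity claim still salvages the count); your version sidesteps this entirely by committing to a single nearest endpoint from the outset. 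The tightness example is identical to the paper's.
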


Several nontrivial questions arise regarding $r$-independent sets and $\alpha_r$.

\paragraph{Discretization and tractability.} Is there a discretization of the edge set of a continuous graph preserving the $r$-independence number? Here, by discretization we mean any predetermined finite set of points in $\Gamma$, e.g., all endpoints and all points of the edges splitting the edges in small equal-length intervals. This question was addressed and solved by Hartmann and Lendl~\cite{HL2022} and Hartmann~\cite{H2022}. They show that there is a solution to the continuous problem where the independent set elements are situated in the points on rational distances from the endpoints of the graph. Moreover, they show that numerators and denominators of these rational distances are polynomially bounded in the input of the problem, which answers yet another non-trivial complexity question: Is there a polynomial size certificate (read polynomially encoded solution) for the maximum $r$-independent set problem in general continuous graphs? Here, the answer is yes, such a certificate does exist, based on the aforementioned rational distances. 

Now, when we know that the problem is in $NP$, the question is: For which values of $r$ the problem is solvable in polynomial time and for which values it becomes $NP$-hard? Consider, for instance, the case with large values for $r$, e.g., $r\geq \frac{n}{2}$ where $n$ is the number of endpoints of a given continuous graph. In this case, the problem is trivial as the diameter of a connected continuous graph, that is, the largest distance between any two points in a connected component of the graph, is at most $n-1$, making it impossible to find two points in a connected component that are distant from each other by $2r$.  For some small values of $r$, Grigoriev et al.~\cite{GHLW2021} present several complexity results. In particular, they prove that if $r=\frac{1}{b}$ or $r=\frac{2}{b}$ for some natural number $b$, the problem is solvable in polynomial time. On the negative side, they prove that if $r=\frac{a}{b}$ where $a$ and $b$ are integers such that $a\geq 3$ and $gcd(a,b)=1$, computing the maximum $r$-independent set is $NP$-hard. 

Clearly, the complexity of the problem is highly dependent not only on the nature of the number $r$ but also on the specific structure of the graph. Thus, the open question of this section is to have a picture of such a joint dependency.   

\paragraph{Relaxation gap for $r=1$.} First, we observe that the ratio between the $1$-independence number $\alpha_1(\Gamma)$ in the continuous graph and the size of the maximum independent set $\alpha(G)$ in the respective combinatorial counterpart $G=G(\Gamma)$ might be arbitrarily high. Consider a complete combinatorial graph $K_n$. Without loss of generality, let $n$ be even. Clearly, $\alpha(K_n)=1$ as all vertices are mutually adjacent and only one can belong to an independent set. Now, consider the continuous graph $\Gamma(K_n)$. Since $n$ is even, $K_n$ has a matching $M$ containing $n/2$ edges. In $\Gamma(K_n)$, consider a set of mid points of edges in $M$. By construction, the distance between any two such points is exactly 2. Therefore, the set of mid points of $M$ is a $1$-independent set in $\Gamma(K_n)$. Hence, $\alpha_1(\Gamma(K_n))\geq n/2$. It is unclear whether or not $n/2$ is a tight upper bound for the ratio $\alpha_1(\Gamma(G))/\alpha(G)$. We leave this as an open question of this section. Furthermore, we do expect that some structural properties of combinatorial graphs influence the ratio, and it is interesting to see how the ratio behaves in special cases/classes of graphs. 

\paragraph{Approximability.} Notice, unless $P=NP$, for any $\varepsilon>0$ there is no polynomial time algorithm producing an independent set of cardinality $n^{1-\varepsilon}$ for a general combinatorial graph on $n$ vertices. This is a straightforward consequence of the results by Zuckerman~\cite{Zuckerman2007} and H{\aa}stad~\cite{Hastad1999} that the maximum clique problem is hard to approximate within $n^{1-\varepsilon}$. This inapproximability result matches the trivial $1/n$-approximation, which is, e.g., constructing an independent set greedily starting with an arbitrary vertex. The question for the continuous version is whether this notorious inapproximability of the problem is still present in continuous graphs or the problem becomes somewhat easier to approximate? The high relaxation gap illustrated in the previous paragraph suggests that $\alpha(G)$ and the respective $\alpha_1(\Gamma)$ might differ significantly, implying that (in)approximability of the problems might also be different. Clearly, the (in)approximability, like tractability above, depends on the radius $r$. As mentioned above, the problem becomes polynomially solvable for large and some small values of $r$. For any rational $r\leq \frac{1}{2}$, every edge might contain at most $\lceil \frac{1}{2r}\rceil$ points from an $r$-independent set, and it will contain at least $\lfloor \frac{1}{2r} \rfloor$ points from the maximum $r$-independent set. The latter statement holds as for any edge of a continuous graph, regardless of other independent set points allocation, one can pick for the independent set the points in the interior of the edge on distance $r+i\times 2r$, $i=0,\ldots, \lfloor \frac{1}{2r} \rfloor -1$, from either of the edge endpoints. This leads to the performance guarantee of the solution of at least $1-\frac{1}{k+1}$, where $k=\lfloor \frac{1}{2r} \rfloor$. Since $r\leq \frac{1}{2}$, this number is at least $\frac{1}{2}$. The specific open question we ask in this paragraph reads: For any rational number $r>0$, is there a polynomial time approximation algorithm for the maximum $r$-independent set problem with performance guarantee bounded by a constant?    

\subsection{Vertex cover}

Another famous problem from the Karp's list~\cite{K1972} is the minimum vertex cover problem.  A \emph{vertex cover} of a graph is a subset of its vertices such that for every edge of the graph at least one of its endpoints is in the vertex cover. The vertex cover problem is to find a minimum size vertex cover. In the literature, the size of a minimum vertex cover of a graph $G$ is denoted by $\beta(G)$.

For a continuous graph $\Gamma=(V,E)$, the corresponding continuous version of the minimum vertex cover problem can be formulated as follows. 

\begin{definition}
Given a positive real number $r$, a set of balls $\mathcal{B}=\{B(p,r):\ p\in E\}$ is called an $r$-cover of $\Gamma$ if for any point $q\in E$ there is a ball $B\in \mathcal{B}$ containing $q$. The $r$-cover number of $\Gamma$ is the cardinality of the minimum $r$-cover in $\Gamma$ further denoted as $\beta_r(\Gamma)$ or $\beta_r$ for simplicity.
\end{definition}

If $r=1$ and under an additional restriction that all centers of the balls in $\mathcal{B}$ are in $V$ rather than in $E$, finding a minimum $1$-cover in $\Gamma$ is equivalent to finding a minimum vertex cover in $G(\Gamma)$. Again, if the additional restriction is relaxed, i.e., a center of a ball from $\mathcal{B}$ can be anywhere in $E$, the problem becomes different.

The following open questions would be interesting to addressed for the $r$-cover of a continuous graph.

\paragraph{The $\alpha-\beta$ duality.} There is a famous and very straightforward invariant in the classical graph theory: For every graph $G$ on $n$ vertices, it holds that $\alpha(G)+\beta(G)=n$. This is because a complement of an independent set in a combinatorial graph is a vertex cover. The same invariant from the classical graph theory still works for continuous graphs in case $r=1$, i.e., $\alpha_1(\Gamma)+\beta_1(\Gamma)=n$ for every continuous graph $\Gamma$. This was proven by Hartmann~\cite{H2022}, see Theorem 3.4.2 in that thesis. To illustrate this, consider the continuous ``envelope'' in Figure~\ref{fig:envelop}. Observe that $\alpha_1= 3$ and $\alpha=2$. Also observe that $\beta_1=3$ and $\beta=4$. For a $1$-cover of size $3$, take, e.g., the presented maximum $1$-independent set which is eventually also a cover. For simplicity of presentation, we do not provide a formal proof that $\alpha_1=\beta_1=3$ in the special case of the ``envelop'' example. We notice $\alpha_1+\beta_1=\alpha+\beta=6$ in this case. In general, the following question can be addressed: Given $r>0$, what is the bound on $\alpha_r(\Gamma)+\beta_r(\Gamma)$ for a continuous graph $\Gamma$ in terms of $n$ and $r$? Is the bound tight?

\paragraph{Discretization, tractability and $NP$-certificates.} The complexity of the minimum $r$-cover was extensively studied by Hartmann et al~\cite{HLW2022}. Particularly, they prove that for every rational $r>0$, the problem is contained in $NP$. Since this result is based on the subdivision of edges, we can affirmatively answer the question about existence of cover preserving discretization and existence of polynomial size certificates for the (continuous) minimum $r$-cover problem. Hartmann et al.~\cite{HLW2022} also show that the problem is polynomially solvable whenever $r$ is a unit fraction, and that the problem is $NP$-hard for all non-unit fractions $r$. As an interesting open problem, Hartmann et al.~\cite{HLW2022} suggested to study the complexity of the problem with an algebraic real $r$. Such problems are also contained in $NP$, and the conjecture of Hartmann et al.~\cite{HLW2022} was that the problem is $NP$-hard for all algebraic values $r$ that are not unit fractions. Recently, Hartmann and Lendl~\cite{HL2022} proved the conjecture for both cover and independent set problems even for all real $r>0$ that are not unit fractions. 

\paragraph{Relaxation gap for $r=1$.} Notice, the minimum $1$-cover problem is polynomially solvable by techniques from matching theory, see Hartmann et al.~\cite{HLW2022}, in contrast to the hardness of the classic minimum vertex cover problem. Similarly to the independent set problem above, the following natural open question arises: What is the highest ratio between $\beta(G)$ and $\beta_1(\Gamma(G))$ over all $G$? Following intuition from the matching theory and known results for the minimum vertex cover problem, we conjecture the ratio is at most 2. If this is true, the following example shows the tightness of the bound. Consider a complete combinatorial graph $K_n=(V,V\times V)$ on $n$ vertices and its continuous counterpart $\Gamma(K_n)$. To cover all edges of $K_n$, the minimum vertex cover $S$ should have at least $n-1$ vertices, otherwise the edge between two vertices from $V\setminus S$ is uncovered. Notice, any $n-1$ vertices of $K_n$ cover all edges in the graph. Therefore, $\beta(K_n)=n-1$. Consider $\Gamma(K_n)$ and let again, for simplicity of presentation, $n$ be even. Similarly to the construction for the independent set problem, consider any inclusion maximal matching $M$ in $K_n$ and the corresponding set of continuous edges in $\Gamma(K_n)$. Notice, the set of mid points in $M$ is a $1$-cover of $\Gamma(K_n)$ yielding $\frac{\beta}{\beta_1}\leq 2-\frac{2}{n}$. 

\paragraph{(In)approximability.} It is well known that the classic minimum vertex cover problem is $2$-approximable. To give a reader a very intuitive argument for that, consider the following folklore construction. Given a combinatorial graph $G$, let $M$ be an inclusion maximal matching in $G$, e.g., found greedily. Matching $M$ is a subgraph of $G$ and therefore a minimum vertex cover has at least $|M|$ vertices. Consider the set $S$ of all endpoints in $M$. Since $M$ is maximal, $S$ is a vertex cover of $G$. Notice, $|S|=2|M|$ that finishes the argument. There is also a negative result that, unless $P=NP$, there is no polynomial time approximation algorithm for the minimum vertex cover problem having performance guarantee better than $1.3606$; see Dinur and Safra~\cite{DinurSafra2005}. There is even some support to the claim that the minimum vertex cover problem is hard to approximate within $2-\varepsilon$ for any $\varepsilon>0$; see Khot and Regev~\cite{KhotRegev2008}. We suggest the following straightforward open question: For the hard cases of the problem (with the radius being a non-unit fraction, see~Hartmann et al~\cite{HLW2022}), what are the lower bounds for the inapproximability of the minimum $r$-cover problem? For insight and the first (in)approximability results see also Hartmann and Janßen~\cite{HJ2024}.

\subsection{Chromatic number}

The chromatic number problem is another problem on the Karp's list~\cite{K1972}.  The problem is defined as follows. Given a (combinatorial) graph $G=(\tilde{V},\tilde{E})$ and a positive integer $c>0$, the question is whether exist an assignment $f: \tilde{V}\rightarrow\{1,\ldots,c\}$ such that for every edge $(v,u)\in \tilde{E}$ it holds that $f(v)\neq f(u)$? If such an assignment does exist, the graph is called $c$-\emph{colourable} as every number from the set $\{1,\ldots,c\}$ is associated with a distinct colour and one is colouring the vertices of the graph in a way no two adjacent vertices receive the same colour. The problem of recognition of $c$-colourability is $NP$-complete even for $c=3$, see Dailey~\cite{D1980}. The \emph{chromatic number} of a graph $G$ is the minimum natural number $c$ such that $G$ is $c$-colourable. The chromatic number of a graph $G$ is denoted in the literature by $\chi(G)$. The problem of finding a chromatic number, as well as the independent set and vertex cover problems,  are among the most extensively studied problems in discrete mathematics and combinatorial optimization. 

Consider the continuous setting. For a given continuous graph $\Gamma=(V,E)$ and two positive integers $r>0$ and $c>0$ we define:

\begin{definition}
    Graph $\Gamma$ is $(r,c)$-colourable if there is a set of balls $\mathcal{B}=\{B(p,r):\ p\in E\}$ forming an $r$-cover of $\Gamma$ and an assignment $f: \mathcal{B}\rightarrow\{1,\ldots,c\}$ such that for every two balls $B,B'\in \mathcal{B}$ having non-empty intersection, i.e.  $B\cap B'\neq \emptyset$, it holds that  $f(B)\neq f(B')$.  The $r$-chromatic number of $\Gamma$, further denoted by $\chi_r(\Gamma)$ or $\chi_r$ for simplicity, is the minimum number of colours $c$ such that $\Gamma$ is $(r,c)$-colourable. 
\end{definition}

Similarly to the combinatorial problem, we associate the assignment $f$ with assignment of colours from the set $\{1,\ldots,c\}$ to the balls of the $r$-cover. Notice, should the centers of the balls in a $\frac{1}{2}$-cover of $\Gamma=(V,E)$ be the vertices in $V$, finding $(\frac{1}{2},c)$-coloring and $\frac{1}{2}$-chromatic number in $\Gamma$ are equivalent to finding $c$-colouring and chromatic number in $G(\Gamma)$, respectively. The colour assignment $f$ in the continuous version of the problem can be obtained from the colour assignment $f'$ of the combinatorial problem by assigning a ball centered at $v\in V$ the same colour as the vertex $v$ receives in $G(\Gamma)$, and vice versa. This holds because for any two adjacent vertices $v$ and $u$ of $G(\Gamma)$ receiving different colours in $f'$, the two $\frac{1}{2}$-radius balls with centers at $v$ and $u$ in $\Gamma$ do intersect in the middle point of the edge $(v,u)\in E$, and therefore receive different colours in $f$. 

Unlike the previous sections, when dealing with colouring of continuous graphs, we focus solely on the case $r=\frac{1}{2}$. The reason to consider only this radius is twofold. First, this case is a direct counterpart of the classic colouring and chromatic number problems on combinatorial graphs. Second, for $r=\frac{1}{2}$, the variety of challenging open problems is already very extensive. 

For this section, we came up with the following open questions. 

\paragraph{Colouring a complete continuous graph.} Let $K_n$ and $\Gamma(K_n)$ be a complete (combinatorial) graph on $n$ vertices and its continuous counterpart, respectively. A seemingly simple question to ask is: What is the $\frac{1}{2}$-chromatic number of $\Gamma(K_n)$? For example, in Figure~\ref{fig:K4 colouring proof}, a $(\frac{1}{2},2)$-colouring of $\Gamma(K_4)$ is presented where the continuous graph $\Gamma(K_4)$ is covered by the red and blue $\frac{1}{2}$-radius balls. 
\begin{figure}
    \centering
    \includegraphics[width=0.4\linewidth]{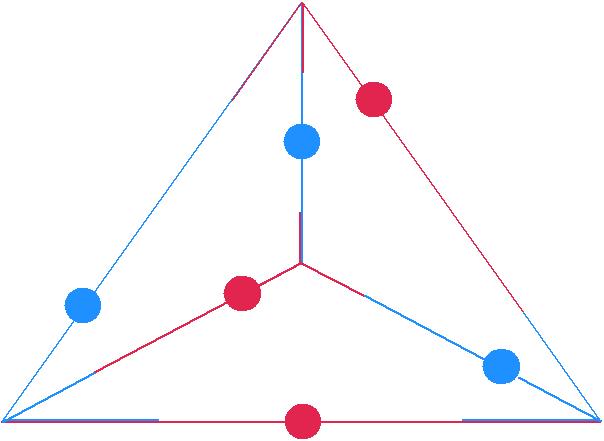}
    \caption{$(\frac{1}{2},2)$-colouring of $\Gamma(K_4)$}
    \label{fig:K4 colouring proof}
\end{figure}
We conjecture that the $\frac{1}{2}$-chromatic number of $\Gamma(K_n)$ is $\lceil\frac{n}{2}\rceil$. Though we cannot provide a rigorous proof for the lower bound of this conjecture, the following proposition establishes the upper bound. 

\begin{proposition}
$\chi_{\frac{1}{2}}(\Gamma(K_n))\leq \lceil\frac{n}{2}\rceil$ for any natural $n$.
\end{proposition}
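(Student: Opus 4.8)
I want to produce a $\frac{1}{2}$-cover of $\Gamma(K_n)$ using at most $\lceil n/2 \rceil$ colors. The natural idea is to pair up the vertices and let each color class correspond to one pair, so I would start by fixing a partition of the edge set guided by a structural decomposition of $K_n$.

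Let me think about the construction. For each vertex $v \in V$, the $\frac{1}{2}$-radius ball centered at $v$ covers exactly the "half-edges" incident to $v$, i.e., the half of each edge $e_{v,u}$ closer to $v$. So placing balls at all $n$ vertices gives a valid $\frac{1}{2}$-cover, and the midpoint of each edge $e_{v,u}$ is the meeting point of the balls at $v$ and $u$. This means two vertex-balls $B(v, \frac12)$ and $B(u, \frac12)$ intersect iff $u,v$ are adjacent in $K_n$ — which is always, since it's complete. Hence coloring the vertex-balls directly would need $n$ colors, too many. The improvement must come from NOT placing balls at vertices but at interior points of edges, so that balls intersect less often.

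So the refined plan: use a perfect matching (for even $n$) or a near-perfect matching (for odd $n$) of $K_n$ as the backbone. For $n$ even, $K_n$ decomposes into $n-1$ perfect matchings, but that's not directly what I need; rather, I want to cover the whole continuous graph using balls centered cleverly. Here is the cleaner approach I would pursue: decompose the edges of $K_n$ into $\lceil n/2 \rceil$ groups, where each group is a set of edges that can be covered by balls all receiving the same color — meaning within a group the chosen balls are pairwise non-intersecting. A set of vertex-disjoint edges (a matching) gives rise to balls centered at the midpoints that are pairwise at distance $\geq 2 \cdot \frac12 = 1 > 0$, hence disjoint. By the known edge-coloring of $K_n$ (its chromatic index is $n-1$ for even $n$ and $n$ for odd $n$), I can partition edges into matchings, but that yields too many color classes. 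Instead I would aim to cover the graph with $\lceil n/2 \rceil$ carefully placed balls per "layer," exploiting that a single ball of radius $\frac12$ centered at a vertex covers many half-edges at once.

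\textbf{The main obstacle} will be organizing the balls so that the number of colors is exactly $\lceil n/2 \rceil$ while maintaining both the covering property and the proper-coloring (non-intersection within a color class) property simultaneously. The tension is that vertex-centered balls cover efficiently but mutually intersect (forcing many colors), while midpoint-centered balls avoid intersection but each covers only one edge. I expect the resolution to interpolate: pick a Hamiltonian-type or matching-based structure and assign roughly two vertices' worth of coverage to each color. I would verify the construction first on small cases ($n=3,4$, matching Figure~\ref{fig:K4 colouring proof}), then handle even and odd $n$ separately, with the odd case needing one extra color for the leftover unmatched vertex. The bookkeeping to confirm every point of every edge lies in some ball, and that no two equally-colored balls overlap, is the step most likely to hide a subtle gap.
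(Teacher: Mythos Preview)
Your plan is aimed in exactly the right direction---fix a perfect matching, assign one colour to each matching edge, and try to cover the remaining edges using only those $\lceil n/2\rceil$ colours---and this is precisely the route the paper takes. But you stop at the point where the actual argument begins: you never say where the balls on the non-matching edges are centred or which of the two available colours each one receives, and you yourself flag this as ``the step most likely to hide a subtle gap.'' A plan that names the obstacle without resolving it is not yet a proof.

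The missing idea, which the paper supplies explicitly, is the quarter-point placement. For every cross edge $e_{p,q}$ joining two matching edges, one places a \emph{single} ball on $e_{p,q}$ centred at distance $\tfrac14$ from one endpoint (hence $\tfrac34$ from the other), and the colour is chosen consistently so that each colour's balls sit at distance $\tfrac14$ from the ``foreign'' matching vertices. With this arrangement one checks two things: (i) every point of every edge is within $\tfrac12$ of some centre (the quarter near each vertex is reached through that vertex by the ball on an adjacent cross edge), and (ii) any two equally coloured balls have centres at distance at least $1$, so they do not overlap. Neither your vertex-centred nor your midpoint-centred candidates achieve both, and your text gives no interpolating construction; the $\tfrac14$--$\tfrac34$ choice is exactly the device that resolves the tension you correctly identified.
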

\begin{proof}
Again, for simplicity of presentation, we prove the proposition for an even number $n$. Notice, this is without loss of generality as the bound in the proposition is rounded up and for odd numbers $n$ we can straightforwardly take the colouring of an even bigger graph $\Gamma(K_{n+1})$ involving $\frac{n+1}{2}$ colours. 

For an even number $n$, we construct $(\frac{1}{2},\frac{n}{2})$-colouring of $\Gamma(K_n)$ as follows. First, we take an arbitrary maximal matching in the graph, i.e., a set of $\frac{n}{2}$ mutually disjoint edges (intervals of unit length between the vertices in $V$). We assign to each of the edges of this matching a distinct colour. Thus, we have used already all $\frac{n}{2}$ colours available in the construction. Herewith we assume that edges of the matching are completely covered by distinctly coloured $\frac{1}{2}$-radius balls with the centers in the middle of the edges. 

Consider any two edges of the matching, say $e_{u,v}$ coloured blue and $e_{x,y}$ coloured red. Let us remind that  $u,v,x,y\in V$ and all these vertices are distinct. Consider four edges $e_{u,x}$, $e_{u,y}$, $e_{v,x}$ and $e_{v,y}$ which are not in the matching and therefore these edges are not yet covered by any balls and not yet coloured. On the edge $e_{u,x}$ on distance $\frac{3}{4}$ from $u$, and $\frac{1}{4}$ from $x$ respectively, we introduce a center of a new $\frac{1}{2}$-radius ball coloured blue. Likewise, on the edge $e_{v,y}$ on distance $\frac{3}{4}$ from $v$ we introduce a center of another blue $\frac{1}{2}$-radius ball. On edges $e_{u,y}$ and $e_{v,x}$, on distance $\frac{1}{4}$ from $u$ and $v$ respectively, we introduce the centers of two red $\frac{1}{2}$-radius balls. For illustration see Figure~\ref{fig:K4 colouring proof}, where the upper vertex is $u$, the central vertex is $v$, the left vertex is $x$ and the right vertex is $y$. We proceed with this construction for all pairs of edges in the matching.

Now, consider the entire set of all created balls. By construction, any two balls of the same colour are distant from each other by $\frac{1}{4}$. Moreover, all edges of $\Gamma(K_n)$ are covered by the constructed balls. Therefore, the resulting $\frac{n}{2}$-colouring of $\Gamma(K_n)$ is feasible that finishes the proof.     
\end{proof}

\paragraph{$(\frac{1}{2},3)$-colorability of continuous planar graphs.} It is well known that every loopless planar graph admits a $4$-colouring, see Appel and Haken~\cite{AH1977}, Appel, Haken and Koch~\cite{AHK1977}, or Robertson et al.~\cite{RSST1997}. A natural question arises: How many colours are needed for colouring of a continuous planar graph? From the discussion of the first open problem in this section we know that $\Gamma(K_4)$ is $(\frac{1}{2}, 2)$-colourable while $\chi(K_4)=4$. Notice, $K_4$ is a planar graph. This means, a general continuous planar graph might require less than 4 colours. We conjecture that every continuous planar graph is $(\frac{1}{2},3)$-colourable. To get better intuition for this question, it would be helpful to find a continuous planar graph requiring at least $3$ colours and to provide a formal proof of this lower bound of $3$.  A plausible candidate for such $(\frac{1}{2},3)$-colourability is the following graph. Consider $\Gamma(K_4)$ and any planar embedding of it. Let every face $f$ of the embedding receives an extra vertex $v_f$ in the interior of $f$ and edges are created between $v_f$ and the three vertices of $\Gamma(K_4)$ incident to that face $f$. One obtains a continuous planar graph on $8$ vertices and $16$ edges. Notice, extension of $(\frac{1}{2},2)$-colouring for $\Gamma(K_4)$ from Figure~\ref{fig:K4 colouring proof} of $\Gamma(K_4)$ is hardly possible for the constructed new graph as the colour assignment of edges incident to a face of the original $\Gamma(K_4)$ start conflicting with colour assignment of the new edges within that face. However, this does not mean that there is no other $\frac{1}{2}$-ball cover yielding $(\frac{1}{2},2)$-colourability of the constructed graph.

\paragraph{Approximability and bounds.} The discussion about inapproximability initiated in the previous two sections can be easily continued for the $\frac{1}{2}$-chromatic number problem. By Theorem 1.2. in Zuckerman~ \cite{Zuckerman2007}, for all $\varepsilon>0$, it is NP-hard to approximate the chromatic number within $n^{1-\varepsilon}$ for general combinatorial graphs. Again, this inapproximability result matches the trivial greedy $O(\frac{1}{n})$-approximation. The question for the continuous version of the chromatic number problem remains as above: Whether or not the same hardness of approximation remains for the $\frac{1}{2}$-chromatic number problem on continuous graphs? 

Design of approximation algorithms requires constructing upper and lower bounds on the $\frac{1}{2}$-chromatic number for continuous graphs. Regardless computational complexity, an interesting question to address is: Given a continuous graph $\Gamma$, can one specify a lower and an upper bounds for $\chi_{\frac{1}{2}}(\Gamma)$ in terms of a function of $\chi(G(\Gamma))$ and $n$? Any other bounds for $\chi_{\frac{1}{2}}$ as functions of other (especially, polynomial-time computable) parameters are also very much interesting.

\subsection{Treewidth}

In this section, we illustrate how more complicated graph theoretic notions can be translated to the continuous graphs. For example, consider a parameter \emph{treewidth} of a graph. The well-known usefulness of this parameter in combinatorial optimization is reflected in the following statement: On graphs of low (bounded by a constant) treewidth (almost) all classical hard problems, including Maximum Independent Set, Minimum Vertex Cover, Maximum Clique, become tractable, i.e., solvable in polynomial time, see e.g. the survey by Bodlaender~\cite{B1998}. Deciding whether a given graph has a bounded treewidth is also polynomially solvable, see Bodaender~\cite{B1996}. Therefore, for a large variety of combinatorial problems on quickly recognizable graphs of bounded treewidth, one can design efficient algorithms.  

While the definition of treewidth for a continuous graph is difficult to formulate directly, we will formalize it using a notion of $r$-\emph{bramble}: 

\begin{definition}\label{dfn:bramble}
Given a graph $G$ and a positive real number $r>0$, let $r$-\emph{bramble} $\mathcal{B}$ of a continuous graph $\Gamma(G)$ be a set of subtrees of $\Gamma(G)$, distant from each other by at most $r$, i.e., for any two subtrees $T_1$ and $T_2$ of the set, there is a point $p$ in $T_1$ and a point $q$ in $T_2$ on distance at most $r$. The \emph{order} of an $r$-bramble $\mathcal{B}$ of a continuous graph $\Gamma(G)$ is the minimum hitting set of $\mathcal{B}$, i.e., the minimum cardinality set $B$ of points in $\Gamma(G)$ such that every subtree from $\mathcal{B}$ contains at least one point from $B$. Finally, the $r$-\emph{bramble number} of $\Gamma(G)$ is the maximum order of an $r$-bramble over all $r$-brambles of $\Gamma(G)$.
\end{definition}

For the case $r=1$, Definition~\ref{dfn:bramble} is consistent with the definition of a bramble of a (combinatorial) graph introduced by Seymour and Thomas~\cite{ST1993}, where brambles first appeared under the name "screens". The theorem by Seymour and Thomas~\cite{ST1993} establishes the relation between the treewidth of a (combinatorial) graph and the bramble number: for any (combinatorial) graph $G$, the treewidth of $G$ is the bramble number of $G$ plus one. Now, one can define a concept which resembles the concept of treewidth in the setting of continuous graphs:

\begin{definition}
The treewidth of a continuous graph $\Gamma(G)$ is the $1$-bramble number of $\Gamma(G)$.
\end{definition}

Notably, the $1$-\emph{bramble number} is an interesting relaxation of the classic bramble number, and the $r$-\emph{bramble number} is an interesting generalization of it. On one side, this is exactly the way to estimate the treewidth of a continuous graph. On the other hand, recognizing whether the $1$-bramble number of a continuous graph is at most $k$, for a given small integer $k>0$, might be a very challenging and not necessarily tractable problem. We leave this as an open problem of this section.  

\subsection{Conclusion}
As shown by this paper, graph theory problems, ranging from classical problems to more intricate problems, can be reformulated in the continuous setting. This often represents reality more accurately and can lead to some interesting results. We invite readers to continue pursuing the potential of continuous graphs. 

\subsection*{Acknowledgements}
The authors thank Hans Bodlaender, Fedor Fomin, Jesper Nederlof and Gerhard Woeginger for very fruitful and constructive discussions and brainstorms during the Fixed-Parameter Computational Geometry 2018 workshop organized by Lorentz Center, Leiden, The Netherlands. Without their contributions this paper would not be possible. Special thanks go to Tim Hartmann for his very helpful comments, identifying many solved problems and pointing out multiple sources in the literature.



\begin{thebibliography}{99}

\bibitem{AH1977} K. Appel, W. Haken, Every planar map is four colorable. Part I. Discharging, \emph{Illinois Journal of Mathematics} 21 (1977), 429--490. 

\bibitem{AHK1977} K. Appel, W. Haken, J. Koch, Every planar map is four colorable. Part II. Reducibility, \emph{Illinois Journal of Mathematics} 21 (1977), 491--567. 

\bibitem{BC2008} H-J. Bandelt, V. Chepoi, Metric graph theory and geometry: A survey, \emph{Contemporary Mathematics} 453 (2008), 49–-86.

\bibitem{B1946} G.D. Birkhoff, D. C. Lewis, Chromatic polynomials, \emph{Transactions of the American Mathematical Society} 60(3) (1946), 355--451.

\bibitem{B1996} H.L. Bodlaender, A linear time algorithm for finding tree-decompositions of small treewidth, \emph{SIAM Journal on Computing} 25(6) (1996), 1305--1317.

\bibitem{B1998} H.L. Bodlaender, A partial $k$-arboretum of graphs with bounded treewidth, \emph{Theoretical Computer Science} 209(1–2) (1998), 1–-45. 

\bibitem{D1980} D.P. Dailey, Uniqueness of colorability and colorability of planar $4$-regular graphs are $NP$-complete, \emph{Discrete Mathematics} 30(3) (1980), 289–-293. 

\bibitem{DinurSafra2005} I. Dinur, S. Safra, On the hardness of approximating minimum vertex cover, \emph{Annals of Mathematics} 162(2005), 439--485.

\bibitem{F2018} K. Faulkner, A Relaxed Interpretation of Three Classic Vertex Problems, MSc Thesis, Maastricht University, The Netherlands (2018).

\bibitem{ISAAC2024} F. Frei, A. Ghazy, T.A. Hartmann, F. Hörsch, D. Marx, From Chinese Postman to Salesman and Beyond: Shortest Tour $\delta$-Covering All Points on All Edges, in: 35th International Symposium on Algorithms and Computation (ISAAC 2024), Leibniz International Proceedings in Informatics (LIPIcs), Volume 322, 31:1--31:16, Schloss Dagstuhl – Leibniz-Zentrum für Informatik (2024).

\bibitem{GHLW2021} A. Grigoriev, T.A. Hartmann, S. Lendl, G.J. Woeginger, Dispersing Obnoxious Facilities on a Graph, \emph{Algorithmica} 83 (2021), 1734--1749

\bibitem{H2022} T.A. Hartmann, Facility Location on Graphs, PhD Thesis, RWTH Aachen University, Germany (2022). Retrieved from {\tt https://publications.rwthaachen.de/record/951030}.

\bibitem{HJ2024} T.A. Hartmann, T. Janßen, Approximating $\delta$-Covering, in: M. Bieńkowski and M. Englert (eds.) Approximation and Online Algorithms (WAOA 2024), Lecture Notes in Computer Science, Volume 15269, Springer, Cham (2025). 

\bibitem{HL2022} T.A. Hartmann, S. Lendl, Dispersing Obnoxious Facilities on Graphs by Rounding Distances, in: 47th International Symposium on Mathematical Foundations of Computer Science (MFCS 2022), Leibniz International Proceedings in Informatics (LIPIcs), Volume 241, 55:1--55:14, Schloss Dagstuhl – Leibniz-Zentrum für Informatik (2022).

\bibitem{HM2025} T.A. Hartmann, D. Marx, Independence and Domination on Bounded-Treewidth Graphs: Integer, Rational, and Irrational Distances, in: 42nd International Symposium on Theoretical Aspects of Computer Science (STACS 2025), Leibniz International Proceedings in Informatics (LIPIcs), Volume 327, 44:1-44:19, Schloss Dagstuhl – Leibniz-Zentrum für Informatik (2025).

\bibitem{HLW2022} T.A. Hartmann, S. Lendl, G.J. Woeginger, Continuous facility location on graphs, \emph{Mathematical Programming} 192 (2022), 207-–227.

\bibitem{Hastad1999} J. H{\aa}stad, Clique is hard to approximate within $n^{1-\varepsilon}$, \emph{Acta Mathematica} 182 (1999), 105--142. 

\bibitem{K1972} R.M. Karp, Reducibility among combinatorial problems, Complexity of Computer Computations, Plenum Press, 1972.

\bibitem{KhotRegev2008} S. Khot, O. Regev, Vertex cover might be hard to approximate to within $2-\varepsilon$, \emph{Journal of Computer and System Sciences} 74(3) (2008), 335--349.

\bibitem{MT1983} N. Megiddo, A. Tamir, New results on the complexity of p-center problems, \emph{SIAM Journal on Computing}, 12(4) (1983), 751-–758.

\bibitem{M2021} D. Mugnolo, What is actually a metric graph? arXiv:1912.07549 (2021).

\bibitem{RSST1997} N. Robertson, D.P. Sanders, P. Seymour, R. Thomas, The Four-Colour Theorem, \emph{Journal of Combinatorial Theory Ser. B}, 70(1) (1997), 2-–44. 

\bibitem{ST1993} P.D. Seymour, R. Thomas, Graph searching and a minimax theorem for tree-width, \emph{Journal of Combinatorial Theory Ser. B} 58 (1993), 239-–257.

\bibitem{Zuckerman2007} D. Zuckerman, Linear Degree Extractors and the Inapproximability of Max Clique and Chromatic Number. \emph{Theory of Computing} 3 (2007), 103--128.  

\end{thebibliography}
\end{document}